\theoremstyle{thmstyleone}%
\newtheorem{theorem}{Theorem}
\newtheorem{example}{Example}%
\newtheorem{remark}{Remark}%
\theoremstyle{thmstylethree}%
\newtheorem{assumption}{Assumption}
\newcommand{\cmark}{\ding{51}} 
\newcommand{\xmark}{\ding{55}} 
\def\R{{\mathbb R}}
\def\argmin{{\rm argmin}}
\begin{document}
\renewcommand{\qedsymbol}{}
\title[Lipschitz-free Projected Subgradient Method]{Lipschitz-free Projected Subgradient Method with Time-varying Step-size}


\author[1]{Yong Xia}\email{yxia@buaa.edu.cn}

\author[1]{Yanhao Zhang}\email{yanhaozhang@buaa.edu.cn}

\author*[1]{Zhihan Zhu}\email{zhihanzhu@buaa.edu.cn}

\affil[1]{\orgdiv{School of Mathematical Sciences}, \orgname{Beihang University}, \orgaddress{ \city{Beijing} \postcode{100191}, \country{People's Republic of China}}}


\abstract{We introduce a novel family of time-varying step-sizes for the classical projected subgradient method, offering optimal ergodic convergence. Importantly, this approach does not depend on the Lipschitz assumption of the objective function, thereby broadening the convergence result of projected subgradient method to non-Lipschitz case.}

\keywords{Projected subgradient method, Step-size, Nonsmooth convex optimization}


\pacs[Mathematics Subject Classification]{90C25, 90C30}

\maketitle

\section{Introduction}
To tackle the nonsmooth convex optimization problem
\begin{equation*}
	x^*\in\argmin_{x\in\mathcal{X}}f(x),
\end{equation*}
where $\mathcal{X}\subset\R^n$ is a compact convex set enclosed within the Euclidean ball $B(x^*,R)$, and $f$ is a (possibly nonsmooth) convex function,  the traditional projected subgradient method (PSG) is employed as follows:
\begin{equation*}
	\left\{	\begin{aligned}
		y_{s+1}&=x_s-\eta_s g_s, ~\text{where } g_s\in\partial f(x_s),\\
		x_{s+1}&=  \argmin_{x\in\mathcal{X}}\Vert x-y_{s+1} \Vert,	\end{aligned} \right.
\end{equation*}
where $\|\cdot\|$ denotes the Euclidean norm throughout this paper.

In the literature, the following common Lipschitz assumption on $f$ is  made:
\begin{assumption}\label{ass}
	There exists an $L>0$ such that  for any $g\in\partial f(x)\neq\emptyset$ and $x\in \mathcal{X}$, it holds that $\Vert g\Vert\leqslant L$.
\end{assumption}
However, many commonly encountered convex functions fail to satisfy Assumption \ref{ass} on compact convex set.
\begin{example}\label{e1}
	The function $f(x)=-\sqrt{x}$ is a convex function on the interval $[0,1]$, which does not satisfy Assumption \ref{ass}.
\end{example}

It is well-established  that employing a constant step-size of
\[
\eta_s\equiv\frac{R}{L\sqrt{t}}, ~s=1,\cdots,t,\footnote{$t$ represents the current total number of iterations, while $s$ serves as the indicator of the iteration process.}
\]	
allows PSG to attain an optimal ergodic convergence rate, which is given by
\[
f\left(\frac{\sum_{s=1}^t x_s}{t}\right)-f(x^*)\leqslant \frac{RL}{\sqrt{t}},
\]
see, for example, \cite{nesterov2004lectures,nesterov2018lectures,bubeck2015convex}.

Recently, a time-varying step-size formula, as presented in \cite{nesterov2004lectures,nesterov2018lectures,bubeck2015convex}, given by
\begin{equation}
	\eta_s=\frac{R}{L\sqrt{s}},~s=1,\cdots,t,\label{size2}
\end{equation}
has been proven to ensure the optimal convergence rate of  PSG as well, as stated in \cite[Corollary 3.2]{Lan}. The following succinct result concerning PSG with the step-size given by \eqref{size2} is credited to \cite{Zhu}:
\begin{equation}
	f\left(\frac{\sum_{s=1}^t x_s}{t}\right)-f(x^*)\leqslant \frac{3RL}{2\sqrt{t}}. \label{conv}
\end{equation}

In \cite{nesterov2004lectures,nesterov2018lectures}, a more practical subgradient-normalized time-varying step-size is further examined, given by 
\begin{equation}
	\eta_s=\frac{R}{\|g_s\|\sqrt{s}},~s=1,\cdots,t, \label{Nesterov}
\end{equation}
which notably does not necessitate the knowledge of the Lipschitz constant beforehand. However, to guarantee the convergence of PSG, Assumption \ref{ass} is still required. Additionally, the convergence rate achieved is merely sub-optimal, given by
\begin{equation}\label{Nessub}
	f\left(\frac{\sum_{s=1}^t \eta_s x_s}{\sum_{s=1}^t \eta_s}\right)-f(x^*)\leqslant \frac{2RL+RL\log t}{4(\sqrt{t+1}-1)}.
\end{equation}
Although computing weighted average in the second half of the iterates \cite{Lan} or taking best iterate $\min_{s=1,\cdots,t}{f(x_s)}$ \cite{beck2017first} can achieve optimal rate, additional costs such as saving historical points or computing the function value at each iteration are required. Meanwhile, the results in \cite{Lan} and \cite{beck2017first} still say nothing on Example \ref{e1}.

For the sake of convenience, we refer to time-varying step-sizes \eqref{size2} and \eqref{Nesterov} as classic step-size and Nesterov step-size respectively in this paper.

The key contribution of this paper (see Section 2) is to introduce a family of subtle variations to Nesterov step-size \eqref{Nesterov}, which enables us to establish the optimal ergodic convergence rate of PSG, notably without requiring Assumption \ref{ass}. In Section 3, we numerically demonstrate the superiority of our step-size family over existing time-varying step-sizes. Conclusion is made in Section 4.

\section{A Family of Step-sizes}
We present the following result without the need for Assumption \ref{ass}. The proof is omitted here since a more generalized convergence analysis will be performed in Theorem \ref{Convergence1}.

\begin{theorem}\label{Convergence}
	For any fixed $a\in{[0,1]}$, PSG with the following step-size family   
		\begin{equation}
			\eta_s=\frac{R}{G_s s^{\frac{a}{2}}}, ~s=1,\cdots,t,\label{size3}
		\end{equation}
		where 
		\begin{equation}
			G_s = \max\{G_{s-1}, \|g_s\|s^{\frac{1-a}{2}}\} ~(G_0=-\infty), \label{G_s}
		\end{equation}
		satisfies
		\begin{equation}\label{conver}
			f\left(\frac{\sum_{s=1}^t x_s}{t}\right)-f(x^*)\leqslant
			\frac{3R}{2\sqrt{t}}\cdot \max_{s=1,\cdots,t}\|g_s\|.
		\end{equation}
	\end{theorem}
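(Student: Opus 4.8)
The plan is to run the standard potential-function analysis of PSG and then use the precise form of \eqref{size3}--\eqref{G_s} to control the two terms it produces, the whole point being that the Lipschitz constant never enters. First I would record the one-step inequality. Since $x_{s+1}$ is the Euclidean projection of $y_{s+1}=x_s-\eta_s g_s$ onto $\mathcal{X}$ and $x^*\in\mathcal{X}$, non-expansiveness of the projection gives $\|x_{s+1}-x^*\|^2\le\|y_{s+1}-x^*\|^2$, which expands to
\begin{equation*}
\|x_{s+1}-x^*\|^2\le\|x_s-x^*\|^2-2\eta_s\langle g_s,x_s-x^*\rangle+\eta_s^2\|g_s\|^2.
\end{equation*}
Combining this with the subgradient inequality $\langle g_s,x_s-x^*\rangle\ge f(x_s)-f(x^*)$ and rearranging yields
\begin{equation*}
f(x_s)-f(x^*)\le\frac{\|x_s-x^*\|^2-\|x_{s+1}-x^*\|^2}{2\eta_s}+\frac{\eta_s\|g_s\|^2}{2}.
\end{equation*}
Summing over $s=1,\dots,t$ reduces the theorem to bounding a telescoping ``distance'' sum and a ``step'' sum.

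For the distance sum I would first note that $\{\eta_s\}$ is non-increasing: $G_s$ is a running maximum, hence non-decreasing, and $s^{a/2}$ is non-decreasing because $a\ge 0$, so $G_s s^{a/2}$ is non-decreasing. Therefore $1/\eta_s-1/\eta_{s-1}\ge 0$, and Abel summation together with the diameter bound $\|x_s-x^*\|\le R$ (which holds since $\mathcal{X}\subset B(x^*,R)$) collapses the weighted telescope to
\begin{equation*}
\sum_{s=1}^t\frac{\|x_s-x^*\|^2-\|x_{s+1}-x^*\|^2}{2\eta_s}\le\frac{R^2}{2\eta_t}=\frac{R}{2}\,G_t\,t^{a/2}.
\end{equation*}
Here the exponent bookkeeping pays off: writing $G_t=\max_{s\le t}\|g_s\|s^{(1-a)/2}$ and using $s\le t$ with $(1-a)/2\ge 0$, each term satisfies $s^{(1-a)/2}t^{a/2}\le t^{1/2}$, so $G_t t^{a/2}\le\sqrt t\,\max_{s\le t}\|g_s\|$ and the distance sum is at most $\tfrac{R}{2}\sqrt t\,\max_{s\le t}\|g_s\|$.

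The key, slightly less mechanical, step is the step sum. I would use the defining inequality $G_s\ge\|g_s\|s^{(1-a)/2}$ in the form $\eta_s\|g_s\|=R\|g_s\|/(G_s s^{a/2})\le R/(s^{(1-a)/2}s^{a/2})=R/\sqrt s$, valid for \emph{every} $a\in[0,1]$ at once — this is exactly what the family \eqref{G_s} is engineered to guarantee. Then, grouping $\eta_s\|g_s\|^2=(\eta_s\|g_s\|)\,\|g_s\|$,
\begin{equation*}
\sum_{s=1}^t\frac{\eta_s\|g_s\|^2}{2}\le\frac{R}{2}\sum_{s=1}^t\frac{\|g_s\|}{\sqrt s}\le\frac{R}{2}\Big(\max_{s\le t}\|g_s\|\Big)\sum_{s=1}^t\frac{1}{\sqrt s}\le R\sqrt t\,\max_{s\le t}\|g_s\|,
\end{equation*}
using $\sum_{s=1}^t s^{-1/2}\le 2\sqrt t$. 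Adding the two bounds gives $\sum_{s=1}^t(f(x_s)-f(x^*))\le\tfrac{3R}{2}\sqrt t\,\max_{s\le t}\|g_s\|$; dividing by $t$ and invoking Jensen's inequality for the convex function $f$ at the average $\tfrac1t\sum_{s=1}^t x_s$ gives \eqref{conver}.

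I expect the real content to lie not in any single computation but in two structural checks lining up: (i) that $\{\eta_s\}$ is non-increasing, so the weighted telescoping yields the clean $R^2/(2\eta_t)$ bound with no residual terms; and (ii) that one must avoid the naive estimate $\eta_s\|g_s\|^2\le(\text{bound on }\eta_s)\cdot(\text{bound on }\|g_s\|^2)$, which reintroduces an $a$-dependent factor (logarithmic as $a\to 0$), in favour of the grouping $(\eta_s\|g_s\|)\cdot\|g_s\|$ that is uniform in $a$ and produces exactly the constant $\tfrac32$. The remaining ingredients — the projection inequality, the subgradient inequality, the integral estimate for $\sum s^{-1/2}$, and Jensen — are standard; one should also dispose of the harmless degenerate case $g_1=0$, in which $x_1$ is already a minimizer and \eqref{conver} is immediate.
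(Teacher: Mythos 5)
Your proof is correct and follows essentially the same route as the paper: the paper omits a direct proof of this theorem and instead derives it as the $k=0$ case of the more general Theorem \ref{Convergence1}, whose argument is exactly your decomposition --- the one-step projection/subgradient inequality, Abel summation using the monotonicity of $1/\eta_s$ to get the $R^2/(2\eta_t)$ term, and the key estimate $\eta_s\|g_s\|^2\leqslant R\|g_s\|/\sqrt{s}$ coming from $G_s\geqslant\|g_s\|s^{(1-a)/2}$. Your two ``structural checks'' are precisely the paper's inequalities \eqref{Monetone} and \eqref{s1}, so nothing is missing.
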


	\begin{remark}
		For Example \ref{e1}, convergence is still guaranteed as long as the initial point is not zero in our Lipschitz-free method, according to Theorem \ref{Convergence}.
	\end{remark}
	
	\begin{remark}
		In the scenario where $\partial f(x_s)$ is not a singleton, Theorem \ref{Convergence} suggests that selecting $g_s$ from $\partial f(x_s)$ with the minimal norm may possibly enhance the convergence. Once $\|g_s\| = 0$ for the selected subgradient,  it indicates that the global optimum has been reached and further iterations are no longer necessary.
	\end{remark}

	\begin{remark}\label{r1}
		Given Assumption \ref{ass}, Theorem \ref{Convergence} allows us to swiftly attain the optimal ergodic convergence result of \eqref{conv}.
	\end{remark}
	
	\begin{remark}\label{r2}
		Even when $\|g_s\|$ is unbounded (i.e., Assumption \ref{ass} is violated), convergence of PSG can still be assured by Theorem \ref{Convergence}, as long as the growth rate of $\|g_s\|$ during the iteration strictly stays  within $\mathcal{O}(\sqrt{s})$.
	\end{remark}
	
	\begin{remark}\label{r3}
		Theorem \ref{Convergence} also indicates that when $\max_{s=1,\cdots,t}\|g_s\|$ becomes large as iterations accumulate, restarting might be an effective strategy.
	\end{remark}
	
	
	The following theorem demonstrates that our Lipschitz-free method still retains the weak ergodic convergence result of PSG in \cite{Zhu}.
	
	\begin{theorem}\label{Convergence1}
		For any fixed $k\geqslant -1$, define $w_s^{(k)}$ \footnote{For the classic step-size \eqref{size2}, the weak ergodic weight \eqref{weight} here is equivalent to the weak ergodic weight in \cite{Zhu}.} as  
		\begin{equation}\label{weight}
			w_s^{(k)} = 
			\begin{cases} 
				1/\eta_s^k, & -1\leqslant k \leqslant 0, \\ 
				s^{\frac{k}{2}}, & ~~k > 0.
			\end{cases}
		\end{equation}
		PSG with step-size family \eqref{size3} satisfies   
		\begin{equation}\label{WeakConvergence}
			f\left( \frac{\sum_{s=1}^t w_s^{(k)} x_s}{\sum_{s=1}^t w_s^{(k)}} \right) - f(x^*)
			\leqslant \frac{t^{\frac{k+1}{2}} +\sum_{s=1}^{t} s^{\frac{k-1}{2}}}{2\sum_{s=1}^{t} s^{\frac{k}{2}}} R\max\limits_{s=1,\dots,t}{\|g_s\|}.
		\end{equation}
	\end{theorem}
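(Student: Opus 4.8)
The plan is to carry out the classical projected-subgradient telescoping argument, but applied to the weighted iterate with weights $w_s^{(k)}$ rather than to the plain average, and to exploit the two-sided control on $\eta_s$ that the rule \eqref{G_s} provides. From nonexpansiveness of the projection onto $\mathcal{X}$, the inclusion $g_s\in\partial f(x_s)$, and convexity of $f$, one obtains the one-step estimate
\[
2\eta_s\bigl(f(x_s)-f(x^*)\bigr)\le\|x_s-x^*\|^2-\|x_{s+1}-x^*\|^2+\eta_s^2\|g_s\|^2 .
\]
Multiplying by $w_s^{(k)}/(2\eta_s)$, summing over $s=1,\dots,t$, and applying Jensen's inequality reduces the theorem to bounding
\[
\sum_{s=1}^t\frac{w_s^{(k)}}{2\eta_s}\bigl(\|x_s-x^*\|^2-\|x_{s+1}-x^*\|^2\bigr)+\frac12\sum_{s=1}^t w_s^{(k)}\eta_s\|g_s\|^2
\]
and then dividing by $\sum_{s=1}^t w_s^{(k)}$.

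Three elementary facts about the step-size \eqref{size3} carry the proof; write $M_t:=\max_{s=1,\dots,t}\|g_s\|$ and assume $g_1\ne 0$ (if $g_1=0$ then $x_1$ is already optimal and there is nothing to prove), so that $G_s\ge\|g_1\|>0$ for every $s$ and all the step-sizes are well defined. First, $G_s\ge\|g_s\|s^{(1-a)/2}$ yields $\eta_s\|g_s\|\le R/\sqrt{s}$, hence $\eta_s^2\|g_s\|^2\le R^2/s$. Second, since $a\in[0,1]$ we have $G_s=\max_{j\le s}\|g_j\|j^{(1-a)/2}\le M_t\,s^{(1-a)/2}$, which gives the lower bound $\eta_s\ge R/(M_t\sqrt{s})$, equivalently $1/\eta_s\le M_t\sqrt{s}/R$. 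Third, $w_s^{(k)}/\eta_s$ is nondecreasing in $s$ for every $k\ge -1$: it equals $G_s s^{(k+a)/2}/R$ when $k>0$ and $(1/\eta_s)^{k+1}$ when $-1\le k\le 0$, in each case a product or power of nonnegative nondecreasing quantities. Combining the third fact with $\|x_s-x^*\|^2\le R^2$ (recall $\mathcal{X}\subset B(x^*,R)$), Abel summation bounds the first displayed sum above by $\tfrac{R^2}{2}\cdot\tfrac{w_t^{(k)}}{\eta_t}$.

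It remains to insert these bounds and simplify, in the two cases of the definition \eqref{weight}. For $k>0$ this is immediate: $w_s^{(k)}=s^{k/2}$, so $\sum_s w_s^{(k)}=\sum_s s^{k/2}$, $\;w_t^{(k)}/\eta_t\le M_t t^{(k+1)/2}/R$, and $\sum_s w_s^{(k)}\eta_s\|g_s\|^2\le M_t\sum_s s^{k/2}\cdot R/\sqrt{s}=RM_t\sum_s s^{(k-1)/2}$; dividing by $\sum_s s^{k/2}$ reproduces \eqref{WeakConvergence} exactly. The range $-1\le k\le 0$, where $w_s^{(k)}=1/\eta_s^{k}$ is not proportional to $s^{k/2}$, is the delicate part and, I expect, the main obstacle. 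There I would use the lower bound $\eta_s\ge R/(M_t\sqrt{s})$ twice: once as $w_s^{(k)}=\eta_s^{-k}\ge(M_t/R)^k s^{k/2}$, so that $\sum_s w_s^{(k)}\ge(M_t/R)^k\sum_s s^{k/2}$ produces the denominator appearing in \eqref{WeakConvergence}; and once as $w_s^{(k)}/\eta_s=(1/\eta_s)^{k+1}\le(M_t\sqrt{s}/R)^{k+1}$, which together with $\eta_s^2\|g_s\|^2\le R^2/s$ gives $w_s^{(k)}\eta_s\|g_s\|^2\le R^{1-k}M_t^{k+1}s^{(k-1)/2}$ and, likewise, $\tfrac{R^2}{2}\cdot\tfrac{w_t^{(k)}}{\eta_t}\le\tfrac12 R^{1-k}M_t^{k+1}t^{(k+1)/2}$. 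After dividing by $\sum_s w_s^{(k)}$ the powers of $M_t$ and $R$ collapse to the single factor $RM_t$, and the numerator assembles into $t^{(k+1)/2}+\sum_s s^{(k-1)/2}$; careful bookkeeping of these exponents is the only nontrivial point, the remainder being the routine subgradient telescoping.
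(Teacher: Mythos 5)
Your proposal is correct and follows essentially the same route as the paper's proof: the standard one-step subgradient estimate, Jensen's inequality with the weights $w_s^{(k)}$, Abel summation justified by the monotonicity of $w_s^{(k)}/\eta_s$, and then the two-sided bounds $\eta_s\|g_s\|\leqslant R/\sqrt{s}$ and $1/\eta_s\leqslant \max_{j\leqslant t}\|g_j\|\sqrt{s}/R$ extracted from the definition of $G_s$, split into the cases $-1\leqslant k\leqslant 0$ and $k>0$. The only differences are cosmetic bookkeeping (the paper carries $G_s s^{a/2}$ symbolically a bit longer before invoking the max), plus your explicit handling of the degenerate case $g_1=0$, which the paper relegates to a remark.
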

	\begin{proof}
		Consider PSG with the step-size \eqref{size3}.   We have
		\begin{eqnarray}
			f(x_s)-f(x^*)&\leqslant& g_s^T(x_s-x^*) \nonumber~~~~({\rm by ~the~ definition~ of~  subgradient}) \\
			&=& \frac{1}{\eta_s}(x_s-y_{s+1})^T(x_s-x^*) \nonumber\\
			&=& \frac{1}{2\eta_s}(\Vert x_s-y_{s+1} \Vert^2+\Vert x_s-x^* \Vert^2-\Vert y_{s+1}-x^* \Vert^2)\label{eq1}\\
			&=& \frac{1}{2\eta_s}(\Vert x_s-x^* \Vert^2-\Vert y_{s+1}-x^* \Vert^2)+\frac{\eta_s}{2}\Vert g_s \Vert^2 \nonumber\\
			&\leqslant& \frac{1}{2\eta_s}(\Vert x_s-x^* \Vert^2-\Vert x_{s+1}-x^* \Vert^2)+\frac{\eta_s}{2}\Vert g_s \Vert^2,\label{ineq1}
		\end{eqnarray}
		where \eqref{eq1} is derived from the identity $2a^Tb=\Vert a\Vert^2+\Vert b\Vert^2-\Vert a-b\Vert^2$, and \eqref{ineq1} holds due to the fact that
		\begin{equation*}
			\Vert y_{s+1}-x^* \Vert^2\geqslant \Vert x_{s+1}-x^* \Vert^2,
		\end{equation*}
		which is a direct consequence of the projection theorem.
		
		By definition of $w_s^{(k)}$ in \eqref{weight}, we have 
		\begin{equation}\label{Monetone}
			\frac{w_s^{(k)}}{\eta_s}-\frac{w_{s-1}^{(k)}}{\eta_{s-1}}\geqslant0, ~s\geqslant2
		\end{equation}
		for any $k \geqslant -1$, since $1/\eta_s^{k+1}-1/\eta_{s-1}^{k+1}\geqslant0$ when $-1 \leqslant k \leqslant 0$ and $s^{\frac{k}{2}}/\eta_s - (s-1)^{\frac{k}{2}}/\eta_{s-1}\geqslant0$ when $k > 0$.
		
		By definition of $G_s$ in \eqref{G_s}, we also have the equivalent form of $G_s$ as follows:
		\begin{equation}\label{G_s1}
			G_s = \max\limits_{j=1,\dots,s}{\{\|g_j\|j^{\frac{1-a}{2}}\}}.
		\end{equation}
		
		Consequently, we have
		\begin{eqnarray}
			&& \left( \sum_{s=1}^t w_s^{(k)} \right) \left(f\left( \frac{\sum_{s=1}^t w_s^{(k)} x_s}{\sum_{s=1}^t w_s^{(k)}} \right) - f(x^*)\right)\nonumber \\
			&\leqslant&\sum_{s=1}^t w_s^{(k)}(f(x_s)-f(x^*))  ~~~~({\rm Jensen's~ inequality})\nonumber\\
			&\leqslant&\sum_{s=1}^t \frac{w_s^{(k)}}{2\eta_s }(\Vert x_s-x^* \Vert^2-\Vert x_{s+1}-x^* \Vert^2)+\sum_{s=1}^t\frac{w_s^{(k)}\eta_s}{2}\Vert g_s \Vert^2 ~~~~({\rm by~ \eqref{ineq1}}) \nonumber\\
			&=& \frac{w_1^{(k)}}{2\eta_1}\Vert x_1-x^* \Vert^2+\frac{1}{2}\sum_{s=2}^t\left(\frac{w_s^{(k)}}{ \eta_s }-\frac{w_{s-1}^{(k)}}{ \eta_{s-1}}\right)\Vert x_s-x^* \Vert^2\nonumber\\
			&& -\frac{w_t^{(k)}}{ 2\eta_{t} }\Vert x_{t+1}-x^* \Vert^2+\sum_{s=1}^t\frac{w_s^{(k)}\eta_s}{2}\Vert g_s \Vert^2\nonumber\\
			&\leqslant& R^2 \left( \frac{w_1^{(k)}}{2\eta_1}+\frac{1}{2}\sum_{s=2}^t\left(\frac{w_s^{(k)}}{ \eta_s }-\frac{w_{s-1}^{(k)}}{ \eta_{s-1}}\right) \right)+\sum_{s=1}^t\frac{w_s^{(k)}\eta_s}{2}\Vert g_s \Vert^2 ~~~~({\rm by~ \eqref{Monetone}})\nonumber\\
			&\leqslant& \frac{R^2 w_t^{(k)}}{2\eta_t}+\sum_{s=1}^t\frac{w_s^{(k)}\eta_s}{2}\Vert g_s \Vert^2. \label{sum}
		\end{eqnarray}
		
		For $-1 \leqslant k \leqslant 0$, we have
		\begin{eqnarray}
			&& f\left( \frac{\sum_{s=1}^t w_s^{(k)} x_s}{\sum_{s=1}^t w_s^{(k)}} \right) - f(x^*)\nonumber \\
			&\leqslant& \frac{\frac{R^2}{2\eta_t^{k+1}}+\sum_{s=1}^t\frac{\eta_s^{1-k}}{2}\Vert g_s \Vert^2}{\sum_{s=1}^t \eta_s^{-k}} ~~~~({\rm by~ \eqref{weight}~and~\eqref{sum}})\nonumber \\
			&\leqslant&
			\frac{R}{2}\frac{\left({G_t t^{\frac{a}{2}}}\right)^{1+k}+\sum_{s=1}^t\Vert g_s \Vert^2 \left({G_s s^{\frac{a}{2}}}\right)^{k-1}}{\sum_{s=1}^t \left({G_s s^{\frac{a}{2}}}\right)^{k}} ~~~~({\rm by~ \eqref{size3}})\nonumber \\
			&\leqslant&
			\frac{R}{2}\frac{\left({\max\limits_{s=1,\cdots,t}{\|g_s\|}}\right)^{1+k}t^{\frac{k+1}{2}}+\sum_{s=1}^t\Vert g_s \Vert^{1+k}s^{\frac{k-1}{2}}}{\left({\max\limits_{s=1,\cdots,t}{\|g_s\|}}\right)^{k}\sum_{s=1}^t s^{\frac{k}{2}}} \label{s1}\\
			&\leqslant&
			\frac{t^{\frac{k+1}{2}} +\sum_{s=1}^{t} s^{\frac{k-1}{2}}}{2\sum_{s=1}^{t} s^{\frac{k}{2}}} R\max\limits_{s=1,\dots,t}{\|g_s\|}, \nonumber
		\end{eqnarray}
		where \eqref{s1} holds by definition \eqref{G_s}, \eqref{G_s1} and $0 \leqslant a \leqslant 1$ when $-1 \leqslant k \leqslant 0$. And for $k > 0$, we have
		\begin{eqnarray}
			&& f\left( \frac{\sum_{s=1}^t w_s^{(k)} x_s}{\sum_{s=1}^t w_s^{(k)}} \right) - f(x^*)\nonumber \\
			&\leqslant& \frac{\frac{R^2 t^{\frac{k}{2}}}{2\eta_t}+\sum_{s=1}^t\frac{s^{\frac{k}{2}}\eta_s}{2}\Vert g_s \Vert^2}{\sum_{s=1}^t s^{\frac{k}{2}}} ~~~~({\rm by~ \eqref{weight}~and~\eqref{sum}})\nonumber \\
			&\leqslant& \frac{R}{2}\frac{t^{\frac{k}{2}}G_t t^{\frac{a}{2}}+\sum_{s=1}^t\frac{s^{\frac{k}{2}}\Vert g_s \Vert^2}{G_s s^{\frac{a}{2}}}}{\sum_{s=1}^t s^{\frac{k}{2}}} ~~~~({\rm by~ \eqref{size3}})\nonumber \\
			&\leqslant& \frac{R}{2}\frac{t^{\frac{k+1}{2}}\max\limits_{s=1,\cdots,t}{\|g_s\|}+\sum_{s=1}^t\Vert g_s \Vert s^{\frac{k-1}{2}}}{\sum_{s=1}^t s^{\frac{k}{2}}}\label{s2} \\
			&\leqslant&
			\frac{t^{\frac{k+1}{2}} +\sum_{s=1}^{t} s^{\frac{k-1}{2}}}{2\sum_{s=1}^{t} s^{\frac{k}{2}}} R\max\limits_{s=1,\dots,t}{\|g_s\|}, \nonumber
		\end{eqnarray}
		where \eqref{s2} follows from definition \eqref{G_s}, \eqref{G_s1} and $0 \leqslant a \leqslant 1$ when $k > 0$. The proof is complete.  \qed
	\end{proof}
	
	\begin{remark}\label{r5}
		By setting $k=-1$ in Theorem \ref{Convergence1}, we can immediately obtain the sub-optimal convergence rate \eqref{Nessub} in \cite{nesterov2004lectures,nesterov2018lectures,beck2017first}.
	\end{remark}
	
	\begin{remark}\label{r6}
		The optimal convergence rate \eqref{conver} in Theorem \ref{Convergence} serves as a special case by setting $k=0$ in Theorem \ref{Convergence1}.
	\end{remark}
	
	\begin{remark}\label{r7}
		By setting any $k$ such that $k>-1$ in Theorem \ref{Convergence1},  the convergence rate of  $f\left( \frac{\sum_{s=1}^t w_s^{(k)} x_s}{\sum_{s=1}^t w_s^{(k)}} \right)-f(x^*)$ will be $\mathcal{O}(1/\sqrt{t})$, which is optimal. For any fixed $t$, weak ergodic convergence will gradually lead to non-ergodic convergence as $k$ tends to infinity. This provides an asymptotic perspective on the non-ergodic convergence rate of the Lipschitz-free projected subgradient method.
	\end{remark}
	
	
	The theoretical properties of our step-size family \eqref{size3} compared to classic time-varying step-size \eqref{size2} and Nesterov step-size \eqref{Nesterov}, could be well illustrated in Table \ref{comparison}.
	
	\begin{table}[ht]
		\centering
		\caption{Comparison for convergence result between classic time-varying step-size \eqref{size2}, Nesterov step-size \eqref{Nesterov} and our step-size family \eqref{size3}}
		\begin{adjustbox}{width=1\textwidth}
			\begin{tabular}{@{}l@{\hspace{20pt}}c@{\hspace{20pt}}c@{\hspace{20pt}}c@{}}
				\toprule
				\multirow{2}{*}{\textbf{\raisebox{-1.6ex}{Convergence}}}   & \multicolumn{3}{c}{\textbf{Step-size}} \\ \cline{2-4}
				& \raisebox{-0.5ex}{\textbf{Classic} \eqref{size2}} & \raisebox{-0.5ex}{\textbf{Nesterov} \eqref{Nesterov}} & \raisebox{-0.5ex}{\textbf{Ours} \eqref{size3}} \\ \midrule
				Best Iterate           & \cmark           & \cmark                & \cmark           \\
				Ergodic ($k=-1$, Sub-optimal)              & \cmark           & \cmark           & \cmark            \\
				Ergodic ($k>-1$, Optimal)                 & \cmark           & \xmark           & \cmark           \\
				Without Lipschitz Constant                 & \xmark           & \cmark           & \cmark     \\
				Without Lipschitz Assumption                   & \xmark           & \xmark           & \cmark     \\ \bottomrule
			\end{tabular}
		\end{adjustbox}
		\label{comparison}
	\end{table}
	
	\section{Experiment}
	We consider the ball-constrained Lasso problem formulated as follows:
	\begin{equation}
		\min_{x\in B(R)} \|y- \Phi x\|_2^2 + \lambda \|x\|_1,
	\end{equation}
	where $x\in \mathbb{R}^N$, $\Phi \in \mathbb{R}^{M \times N}$ and $y \in \mathbb{R}^M$. Specifically, we set $N = 512$, $M = 300$, $R=50$ and the regularization parameter $\lambda=10$.
	In this test, $\Phi$ is a random Gaussian matrix, hence the Lipschitz constant for objective function is difficult to be estimated in advance.
	
	\subsection{Sensitivity to $a$ in Step-size Family \eqref{size3}}
	The effect of $a$ on the curve of the best function value (i.e., function value of best iterate) are compared across different test cases as follows.
	
	\begin{figure}[h]
		\centering
		\begin{subfigure}[b]{0.48\linewidth}
			\centering
			\includegraphics[width=0.99\linewidth]{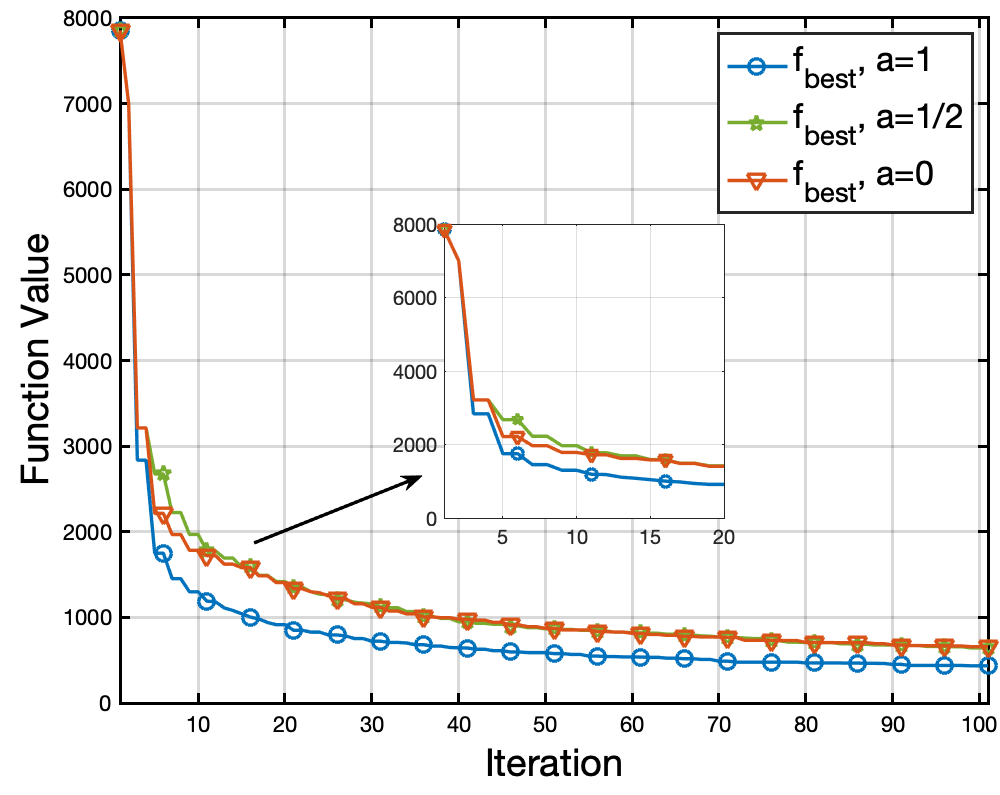}
		\end{subfigure}
		\begin{subfigure}[b]{0.48\linewidth}
			\centering
			\includegraphics[width=0.99\linewidth]{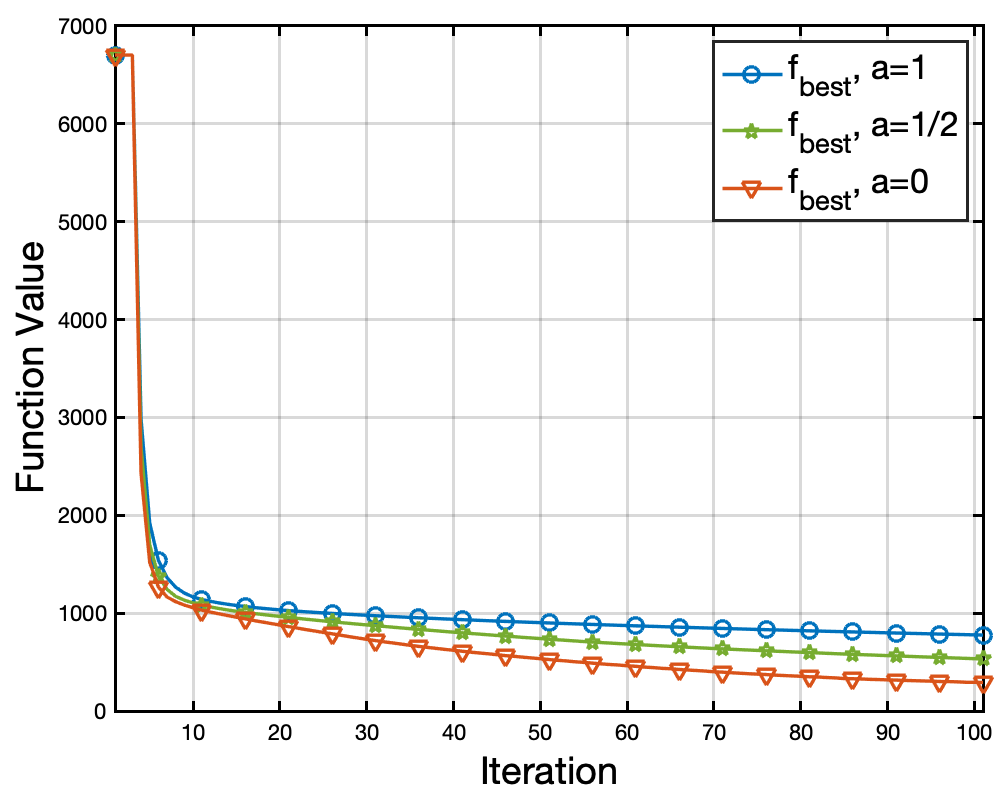}
		\end{subfigure}
		\caption{Sensitivity to value of $a$ on different testing scenarios}
		\label{initial pic}
	\end{figure}
	
	From the experimental results, convergence by our step-size family \eqref{size3} is always guaranteed regardless of the value of $a$. However, different values of $a$ indeed have an impact on the final result. In practice, $a$ could be adjusted according to the specific problem, and an appropriate choice of $a$ may lead to further improvements.
	
	\subsection{Comparison with Step-size \eqref{Nesterov} and Benefits of Weak Ergodicity}
	We further compare the performance of our step-size family \eqref{size3} and Nesterov step-size \eqref{Nesterov}, and demonstrate the practical benefits of weak ergodicity. In this experiment, the value of  $a$ is fixed as 1.
	
	\begin{figure}[h]
		\centering
		\begin{subfigure}[b]{0.48\linewidth}
			\centering
			\includegraphics[width=0.99\linewidth]{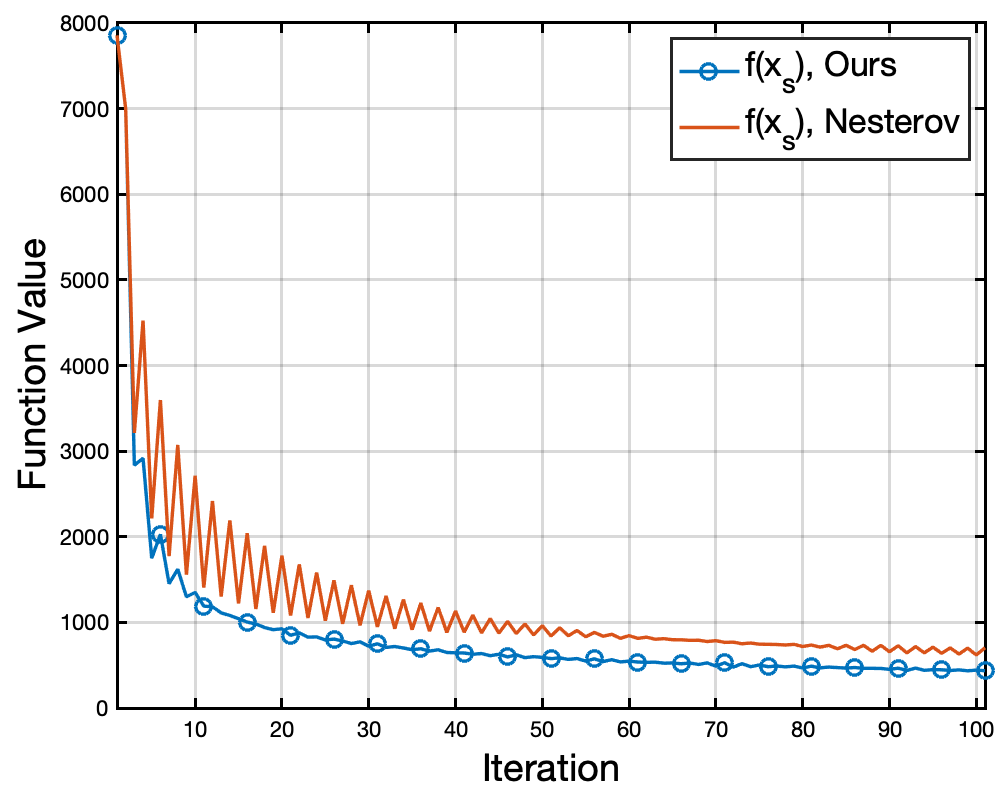}
			\caption{Function values by our step-size family \eqref{size3} and Nesterov step-size \eqref{Nesterov}}
			\label{weak_ergodic1}
		\end{subfigure}
		\begin{subfigure}[b]{0.48\linewidth}
			\centering
			\includegraphics[width=0.99\linewidth]{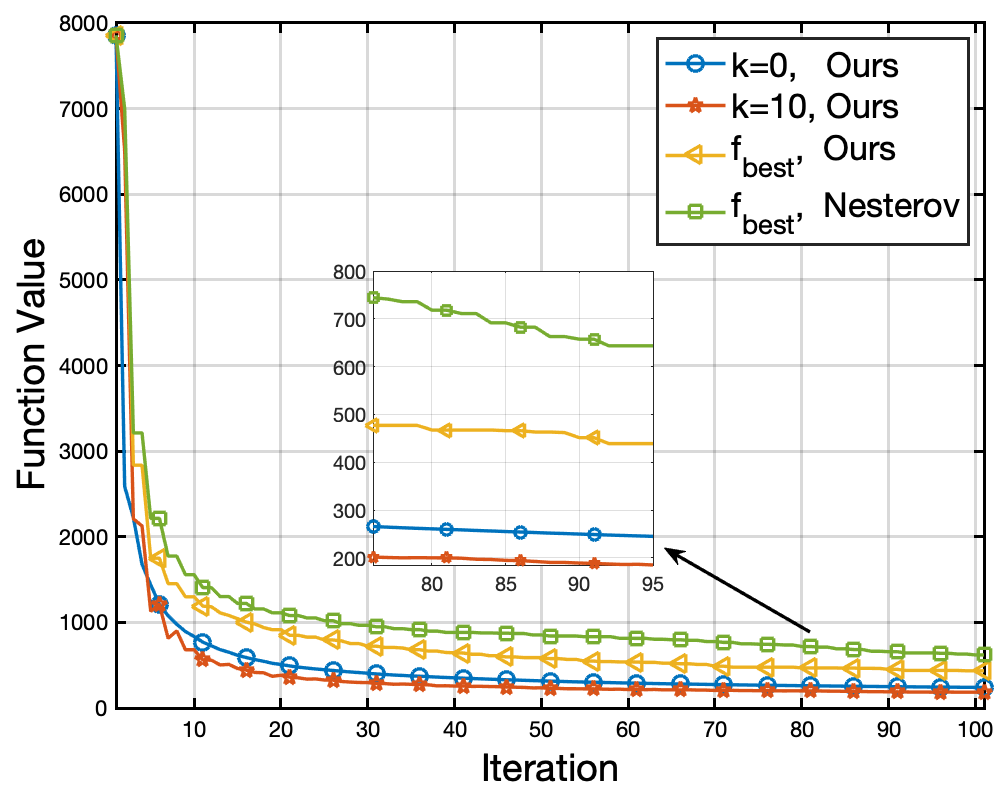}
			\caption{Comparison for best iterate and weak ergodic convergence}
			\label{weak_ergodic2}
		\end{subfigure}
		\caption{Comparison with step-size \eqref{Nesterov} and benefits of weak ergodic convergence}
		\label{weak_ergodic}
	\end{figure}
	
	As shown in Fig. \ref{weak_ergodic1}, the function values generated by Nesterov step-size \eqref{Nesterov} exhibit large oscillations, whereas the function values produced by our step-size family \eqref{size3} are notably more stable. In Fig. \ref{weak_ergodic2}, although all four strategies  achieve the optimal convergence rate theoretically, our step-size family \eqref{size3} shows evidently better performance in the experiments compared to Nesterov step-size \eqref{Nesterov}. Additionally, the experimental results reveal that weak ergodic convergence \eqref{WeakConvergence} not only eliminates the need to compute the function values in each iteration, but also provides additional gains in practice by assigning greater weight to the most recent points, which is also consistent with intuition.
	
	\section*{Conclusion}
	In this paper, we proposed a family of time-varying step-sizes, based on which Lipschitz-free projected subgradient method was established. Without relying on the Lipschitz assumption, we derived the optimal ergodic convergence rate and weak ergodic convergence theory for Lipschitz-free methods, thereby extending the convergence results of projected subgradient method to the non-Lipschitz case. Our step-size family not only possesses the best theoretical properties but also shows evident improvements in experiments compared to traditional step-sizes. Future work includes extending the Lipschitz-free results to mirror descent and other schemes with time-varying step sizes for solving nonsmooth convex optimization.
	

%
%
%
%
%
%
%
%
\bmhead{Funding}

This work was supported by National Natural Science Foundation of China under grant 12171021, and the Academic Excellence Foundation of BUAA for PhD Students.

\bmhead{Data Availability}
The manuscript has no associated data.

%

\bmhead{Conflict of interest}

The authors declare that there is no conflict of interest.
\end{document}